\newcommand{\amsprimary}[1]{{\footnotesize\noindent AMS 2020 \textit{Mathematics subject
classification:} Primary #1\vspace{1pc}}}
\newcommand{\keywordsnames}[1]{{\footnotesize\noindent\textit{Key words:} #1\vspace{1pc}}}
\newtheorem{theorem}{Theorem}
\newtheorem{teo}{Theorem}
\newtheorem{lemma}[teo]{Lemma}
\theoremstyle{definition}
\theoremstyle{remark}
\title[sublinear elliptic systems]{On Sublinear elliptic systems on bounded and thin unbounded domains}
\author{Jean C. Cortissoz }
\address{Universidad de los Andes, Bogot\'a DC, COLOMBIA}
\email{jcortiss@uniandes.edu.co}
\date{}
\begin{document}

\begin{abstract}

In this paper, we demonstrate the existence of positive solutions for certain weakly coupled elliptic systems of sublinear growth under homogeneous Dirichlet boundary conditions. Our findings generalize existing results related to sublinear systems involving two weakly coupled equations, such as the Lane-Emden systems. Moreover, our results apply to both bounded domains and thin unbounded domains, defined as unbounded regions contained between two parallel hyperplanes, and accommodate some discontinuous nonlinearities.

\end{abstract}

\maketitle
 
{\keywordsnames {.} Sublinear elliptic systems, Dirichlet boundary conditions}

{\amsprimary {35J57, 35J60}}

\section{Introduction}

In this paper, we consider a system that generalizes 
the well-studied sublinear Lane-Emden system, which, in the context described below, corresponds
to $n=1$, $\lambda_l\equiv 1$ and $f_l\left(u_0,\dots, u_n\right)=u_{l+1}^{p_{l+1}}$
for $l=0,\dots, n-1$ and $f_n\left(u_0,\dots, u_n\right)=u_0^{p_0}$.
\begin{equation}
\label{eq:gen_lane_emden}
\left\{
    \begin{array}{l}
    -\Delta u_{l}=\lambda_{l}\left(x\right)f_l\left(u_0,u_1,\dots, u_n\right), \quad l=0,1,2,\dots, n, 
    \quad \mbox{in }\quad \Omega\\
    u_l=0, \quad l=0,1,2,\dots,n, \quad \mbox{on} \quad \partial \Omega,
    \end{array}
    \right.
\end{equation}
where $\Omega$ is either a bounded or possibly unbounded thin subdomain (a concept to be defined below) 
of $\mathbb{R}^n$ with smooth boundary.

\medskip
The Lane-Emden equation, which originated in the study of self-gravitating
fluids in astrophysics (for instance, stars),
and the Lane-Emden system (that is, in the case $n=1$) as well as sublinear
elliptic equations and systems have been extensively
studied in the literature (see, for example, \cite{Barile, Brasco, Figueiredo, Kajikija, Ramos}). 
In most cases, a variational approach has been used to study the
existence of nontrivial solutions. In this paper, we 
not only generalize the form of the system, including the
number of possible equations, but our method also provides a way 
to produce explicit estimates on
the solutions that are constructed, as it is based on a monotone iteration method coupled with
a maximum principle. 

\medskip
Our results relate to those of \cite{Ma} and \cite{Cid}, offering new insights: both \cite{Ma} and \cite{Cid} require the functions $f_l$ to be continuous (with \cite{Ma} demanding a specific form for the functions $f_l$) and that the domain
$\Omega$ be bounded. In contrast, our findings allow for weak discontinuities in bounded domains and also address the case of unbounded thin domains, which has not been as thoroughly documented in the literature as that of bounded domains.

\medskip
We focus on a specific class of unbounded domains, which is a notable feature of our method. For a system of two equations, the study of unbounded cylinders—which are a particular case of a thin unbounded domain—while assuming certain symmetries in the nonlinearities is explored in \cite{Barile2}. The techniques utilized in this paper were first developed in \cite{Cortissoz}, and we extend their scope to include more general nonlinearities.

\medskip

In order to state our
main results, we begin by imposing the following conditions on the $f_l$'s in (\ref{eq:gen_lane_emden}).

\medskip
\begin{itemize}

    \item[(a)] The $f_l$'s are measurable functions such that
    \[
    \left|f_l\left(z_0,z_1,\dots,z_{j-1},z_j, z_{j+1},\dots, z_n\right)\right|
    \leq C_{l,j}\left|z_j\right|^{p_{l,j}}+A,
    \]
     with $0<p_{l,j}<1$, $A\geq 0$.

    \item[(b)]
    There is an $\epsilon_0>0$ such that the following holds:
    There are nonnegative constants $A_{l,j}$ such that for each fixed 
    $l$ at least one is different from 0, and $0<\alpha_{l,j}<1$ for which
    \[
    f_l\left(z_0,z_1,\dots, z_n\right)\geq \sum_{j=0}^{n}A_{l,j}\left|z_j\right|^{\alpha_{l,j}},
    \]
    whenever $\left|z_j\right|<\epsilon_0$, $j=0,\dots,n$.

    \item[(c)] The $f_l$'s are increasing in the following sense. If $z_j\geq y_j$, $j=0,1,\dots, n$, then
    \[
    f\left(y_0,y_1,\dots,y_n\dots\right)\leq f\left(z_0,z_1,\dots, z_n\right).
    \] 
    Systems that satisfy this condition are generally called cooperative.

    \item[(d)] The $f_l$'s satisfy the following form of continuity (and thus
    allows for certain discontinuous nonlinearities): If $\left(z_{j,k}\right)_{k=1, 2, \dots}$
    $j=0,1,\dots,n$ are nondecreasing sequences of real numbers such that $z_{j,k}\uparrow z_j$ then
    \[
    \lim_{k\rightarrow\infty} f_l\left(z_{0,k},\dots,z_{n,k}\right)=f\left(z_0,\dots,z_n\right).
    \]
\end{itemize}
We shall say that an unbounded domain $\Omega$ is a \emph{thin domain} if it is contained in the region
bounded between 
two parallel hyperplanes (thus, all bounded domains
are thin). We define the \emph{slab diameter} of an unbounded thin domain $\Omega$ as the infimum
of the
distances of parallel hyperplanes such that $\Omega$ is contained in the region between them.
The definition of slab diameter can also be applied to bounded domains. 

\medskip
Under the previous assumptions on the $f_l$'s, we shall prove the following theorem.
\begin{theorem}
\label{thm:main}
    Let $\Omega$ be a bounded subdomain of $\mathbb{R}^n$. Assume that the $f_j$'s satisfy assumptions (a)-(d), that $\lambda_j\in L^{\infty}\left(\Omega\right)$ with $\lambda_j\geq 0$, and that there is a point $q\in \Omega$
    and $\rho>0$, $l=0,1,\dots,n$, such that $\lambda_l\left(x\right)\geq \Lambda>0$ for all $x\in B_{2\rho}\left(q\right)$.
    Then (\ref{eq:gen_lane_emden}) has a non-trivial non-negative solution.
\end{theorem}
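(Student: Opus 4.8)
The plan is to run a monotone iteration between an ordered pair of sub\nobreakdash- and supersolutions: cooperativity~(c) is exactly what makes the iteration monotone, the sublinear growth~(a) produces a (large) supersolution, the lower bound~(b) a (small, localized) subsolution, and the one\nobreakdash-sided continuity~(d) lets us pass to the limit. I would first record that, applying~(b) at the origin and then~(c), one obtains $f_l\ge 0$ on the nonnegative orthant $\{z:z_j\ge 0,\ j=0,\dots,n\}$, so every iterate constructed below will automatically be nonnegative. For the supersolution I would take $\bar u_l=Ke$ for all $l$, where $e\in H_0^1(\Omega)\cap L^\infty(\Omega)$ is the torsion function, $-\Delta e=1$ in $\Omega$, $e=0$ on $\partial\Omega$, and $\kappa:=\|e\|_{L^\infty}$. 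Since $e\ge 0$, assumption~(a) gives $0\le f_l(\bar u_0,\dots,\bar u_n)\le C_{l,0}(K\kappa)^{p_{l,0}}+A$, hence $\lambda_l f_l(\bar u_0,\dots,\bar u_n)\le\|\lambda_l\|_{L^\infty}\big(C_{l,0}(K\kappa)^{p_{l,0}}+A\big)$, which is $o(K)$ as $K\to\infty$ because $p_{l,0}<1$; thus for $K$ large (simultaneously for the finitely many $l$) we get $-\Delta\bar u_l=K\ge\lambda_l f_l(\bar u_0,\dots,\bar u_n)$, i.e. $(\bar u_l)$ is a weak supersolution.

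For the subsolution I would localize near $q$. Fix a ball with $\overline{B_\rho(q)}\subset B_{2\rho}(q)\subset\Omega$ and $\lambda_l\ge\Lambda$ there, let $\phi_1>0$ be the first Dirichlet eigenfunction of $-\Delta$ on $B_\rho(q)$ with eigenvalue $\mu_1$, normalized so that $\|\phi_1\|_{L^\infty}=1$, and extend $\phi_1$ by zero to all of $\Omega$. Pick, for each $l$, an index $j(l)$ with $A_{l,j(l)}>0$. If $0<\delta<\epsilon_0$ then $\delta\phi_1<\epsilon_0$, so on $B_\rho(q)$ assumption~(b), together with $0\le\phi_1\le 1$ and $\alpha_{l,j(l)}<1$, yields
\[
\lambda_l f_l(\delta\phi_1,\dots,\delta\phi_1)\ \ge\ \Lambda A_{l,j(l)}\,\delta^{\alpha_{l,j(l)}}\phi_1^{\alpha_{l,j(l)}}\ \ge\ \Lambda A_{l,j(l)}\,\delta^{\alpha_{l,j(l)}-1}(\delta\phi_1)\ \ge\ \mu_1(\delta\phi_1)\ =\ -\Delta(\delta\phi_1),
\]
the last inequality holding as soon as $\Lambda A_{l,j(l)}\delta^{\alpha_{l,j(l)}-1}\ge\mu_1$, which is true for $\delta$ small since $\alpha_{l,j(l)}-1<0$. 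Choosing $\delta$ small enough for all $l$ and also $\delta\le K\min_{\overline{B_\rho(q)}}e$, the function $\underline u_l:=\delta\phi_1$ satisfies $0\le\underline u_l\le\bar u_l$ on $\Omega$ and is a weak subsolution of the $l$\nobreakdash-th equation: inside $B_\rho(q)$ by the display, outside $B_\rho(q)$ trivially (there $\underline u_l=0$ and $f_l\ge 0$), and the extension by zero contributes only a nonpositive boundary term over $\partial B_\rho(q)$ by Hopf's lemma.

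With the ordered pair $\underline u\le\bar u$ at hand I would iterate \emph{from below}: set $u^{(0)}_l=\underline u_l$ and let $u^{(k+1)}_l\in H_0^1(\Omega)$ solve $-\Delta u^{(k+1)}_l=\lambda_l f_l(u^{(k)}_0,\dots,u^{(k)}_n)$ in $\Omega$, $u^{(k+1)}_l=0$ on $\partial\Omega$. The maximum principle and cooperativity~(c) give, inductively, $\underline u_l\le u^{(k)}_l\le u^{(k+1)}_l\le\bar u_l$, so $u^{(k)}_l\uparrow u_l$ pointwise with $\underline u_l\le u_l\le\bar u_l$. The $L^\infty$ bound and~(a) bound $\lambda_l f_l(u^{(k)})$ in $L^\infty(\Omega)$, so standard elliptic estimates bound $u^{(k+1)}_l$ in $W^{2,p}\cap C^{1,\beta}$, while~(d) yields $f_l(u^{(k)})\to f_l(u)$ a.e., hence in every $L^p(\Omega)$ by dominated convergence. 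Passing to the limit in the weak formulation then exhibits $u=(u_l)$ as a nonnegative weak, and by regularity strong, solution of~(\ref{eq:gen_lane_emden}); since $u_l\ge\delta\phi_1\not\equiv 0$, it is nontrivial.

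The hard part will be precisely the construction and the \emph{ordering} of the sub/supersolution pair, and the accompanying choice to iterate from below rather than from above: assumption~(d) is only a one\nobreakdash-sided (lower) semicontinuity property, so monotone convergence starting at the subsolution is what permits identifying the limit equation; this in turn forces one to verify that the localized subsolution coming from~(b) sits below the globally large supersolution coming from~(a) everywhere on $\Omega$, including near $\partial\Omega$ and near $\partial B_\rho(q)$, which is exactly where the torsion function, the $L^\infty$\nobreakdash-normalization of $\phi_1$, and Hopf's lemma are used. The remaining steps (the inductive monotonicity of the iterates via the maximum principle and cooperativity, the uniform elliptic estimates, and the limit passage using~(d)) are routine once the pair is in place.
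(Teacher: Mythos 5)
Your proposal is correct, and it follows the same monotone\nobreakdash-iteration skeleton as the paper (iterate from below starting at a localized subsolution built from (b), use cooperativity (c) and the weak maximum principle for monotonicity, and use (d) only along increasing sequences to identify the limit), but two of the key ingredients are handled by genuinely different devices. For the subsolution the paper does not use the first Dirichlet eigenfunction of $B_\rho(q)$ extended by zero; it constructs a globally smooth bump $h=\eta\exp\bigl(-1/(R^2-|x|^2)\bigr)$ supported where $\lambda_l\ge\Lambda$ and verifies $-\Delta h\le C\eta^{1-s}h^{s}\le \Lambda A h^{s}\le\lambda_l f_l(h,\dots,h)$ directly (Lemma \ref{lemma:fundamental_lemma}); your eigenfunction version works too, but only because you supply the extra remark that the normal derivative of $\phi_1$ on $\partial B_\rho(q)$ is nonpositive, so the zero extension is a \emph{weak} (not $C^2$) subsolution -- the paper's bump avoids this issue entirely. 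More substantially, the paper never builds a supersolution: boundedness of the iterates is extracted from (a) through the quantitative maximum principle $\|u\|_\infty\le \frac{d^2}{8}\|h\|_\infty$ (Theorem \ref{prop:maximump}), which yields the recursion $1+\sum_l\|u_{l,k+1}\|_\infty\le M\bigl(1+\sum_j\|u_{j,k}\|_\infty\bigr)^{p}$ with $p<1$ and hence a uniform bound. Your explicit supersolution $Ke$ built from the torsion function achieves the same end and additionally hands you the ordered pair up front; since $\|e\|_\infty\le d^2/8$, it even preserves the slab\nobreakdash-diameter dependence that the paper exploits later for thin unbounded domains. Finally, for the limit passage the paper shows the iterates are Cauchy in $W_0^{1,2}(\Omega)$ via an Egorov argument (Lemma \ref{lemma:convergence}), whereas you invoke uniform $W^{2,p}$/$C^{1,\beta}$ bounds and dominated convergence; both are valid, and both correctly rely on iterating from below so that (d) applies.
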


Under stronger assumptions on $f$, we can show the following existence theorem on thin unbounded
domains.

\begin{theorem}
\label{thm:main2}
    Let $\Omega$ be an unbounded thin subdomain of $\mathbb{R}^n$. Assume that the
    $f_l$'s are H\"older continuous and that they satisfy assumptions (a)-(c). Furthermore,
    assume that each
    $\lambda_l\in L^{\infty}\left(\Omega\right)$ is nonnegative and H\"older continuous,
    and that there is a point $q\in \Omega$
    and $\rho>0$, $l=0,1,\dots,n$, such that 
    $\lambda_l\left(x\right)\geq \Lambda>0$ for all $x\in B_{2\rho}\left(q\right)$.
    Then (\ref{eq:gen_lane_emden}) has a non-trivial non-negative solution.
\end{theorem}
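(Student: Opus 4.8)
The plan is to realize the solution on $\Omega$ as a local limit of solutions on an exhausting family of bounded subdomains, the point being that a thin domain carries a \emph{bounded} supersolution and a \emph{nontrivial}, spatially localized subsolution, both of which can be chosen independently of the exhausting subdomain. Let $S$ be the open slab between the two parallel hyperplanes that contain $\Omega$ and let $d$ denote its width (the slab diameter); since $\Omega$ is open and contained in $\overline S$, in fact $\Omega\subset S$. Writing $t(x)$ for the distance from $x$ to one of the two hyperplanes, so that $0<t<d$ on $S$, the function $w(x):=\tfrac12\,t(x)\bigl(d-t(x)\bigr)$ satisfies $-\Delta w=1$ in $S$, $0\le w\le d^2/8$ and $w>0$ in $S$. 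Using the growth bound (a) one gets $\lambda_l f_l(Mw,\dots,Mw)\le\|\lambda_l\|_\infty\bigl(\sum_j C_{l,j}(d^2/8)^{p_{l,j}}M^{p_{l,j}}+A\bigr)$, which is sublinear in $M$ since every $p_{l,j}<1$; hence for $M$ large, $-\Delta(Mw)=M\ge\lambda_l f_l(Mw,\dots,Mw)$ for all $l$, so $\overline u:=(Mw,\dots,Mw)$ is a supersolution of~\eqref{eq:gen_lane_emden} on every subdomain of $\Omega$, and $\overline u\ge 0$ on $\overline\Omega$.

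For the subsolution I would localize inside $B_{2\rho}(q)$, where $\lambda_l\ge\Lambda>0$. Let $\varphi$ be the first Dirichlet eigenfunction of $B_\rho(q)$, normalized by $\|\varphi\|_\infty=1$, so $-\Delta\varphi=\mu_1\varphi$ and $\varphi>0$ in $B_\rho(q)$, and extend $\varphi$ by $0$ to $\Omega$; note $\overline{B_\rho(q)}\subset B_{2\rho}(q)\subset S$, so $w\ge c_0>0$ on $\overline{B_\rho(q)}$. By (b), for each $l$ there is $j_l$ with $A_{l,j_l}>0$. Since $0\le\varphi\le1$ and $\alpha_{l,j}<1$ give $\varphi^{\alpha_{l,j}}\ge\varphi$, assumption (b) (applicable as soon as $\delta<\epsilon_0$) together with $\lambda_l\ge\Lambda$ on $B_{2\rho}(q)$ yields
\[
\lambda_l\,f_l(\delta\varphi,\dots,\delta\varphi)\ \ge\ \Lambda\sum_{j}A_{l,j}\,\delta^{\alpha_{l,j}}\varphi^{\alpha_{l,j}}\ \ge\ \Lambda A_{l,j_l}\,\delta^{\alpha_{l,j_l}}\,\varphi\qquad\text{in }B_\rho(q).
\]
If $\delta>0$ is so small that $\delta<\epsilon_0$ and $\mu_1\delta\le\Lambda A_{l,j_l}\delta^{\alpha_{l,j_l}}$ (i.e. $\delta^{1-\alpha_{l,j_l}}\le\Lambda A_{l,j_l}/\mu_1$) for every $l$, the last quantity is $\ge\mu_1\delta\varphi=-\Delta(\delta\varphi)$ in $B_\rho(q)$; outside $B_\rho(q)$, $-\Delta(\delta\varphi)=0\le\lambda_l f_l(0,\dots,0)$ (nonnegative by (b)), and the jump of the outward normal derivative of the zero-extension of $\delta\varphi$ across $\partial B_\rho(q)$ has the favorable sign, so $\underline u:=(\delta\varphi,\dots,\delta\varphi)$ is a weak subsolution of~\eqref{eq:gen_lane_emden} on every subdomain of $\Omega$ containing $\overline{B_\rho(q)}$. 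Enlarging $M$ so that $Mc_0\ge\delta$ gives $0\le\underline u\le\overline u$ on $\overline\Omega$; by cooperativity (c), $\underline u\le\overline u$ is then an ordered pair of sub/supersolutions of the kind used in the proof of Theorem~\ref{thm:main}.

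Next I would exhaust $\Omega$ by bounded smooth subdomains $\Omega_k$ with $\overline{\Omega_k}\subset\Omega_{k+1}$, $\bigcup_k\Omega_k=\Omega$ and $\overline{B_\rho(q)}\subset\Omega_1$ (e.g. $\Omega_k=\Omega\cap B_k(0)$, rounded off near the artificial part of the boundary). On $\Omega_k$, with homogeneous Dirichlet data on all of $\partial\Omega_k$, the monotone iteration that proves Theorem~\ref{thm:main}, started from $\underline u$, yields a solution $u^{(k)}=(u^{(k)}_0,\dots,u^{(k)}_n)$ of the system on $\Omega_k$ with $\underline u\le u^{(k)}\le\overline u$; thus $0\le u^{(k)}_l\le Md^2/8$ uniformly in $k$, and $u^{(k)}_l\ge\delta\varphi$ on $B_\rho(q)$. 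By (a) the right-hand sides $\lambda_l f_l(u^{(k)})$ are then bounded in $L^\infty$ uniformly in $k$, and since $\partial\Omega$ is smooth and the $\lambda_l$, $f_l$ are H\"older — so that $x\mapsto\lambda_l(x)f_l(u^{(k)}(x))$ is locally H\"older once $u^{(k)}\in C^{1,\beta}_{\mathrm{loc}}$ — the usual $W^{2,p}$/Schauder bootstrap gives bounds on the $u^{(k)}$ in $C^{2,\gamma}$ over compact subsets of $\overline\Omega$ that are uniform in $k$ (each such compact set is eventually interior to the $\Omega_k$, so only the fixed boundary $\partial\Omega$ enters there). A diagonal subsequence converges in $C^2_{\mathrm{loc}}(\overline\Omega)$ to a function $u$ solving~\eqref{eq:gen_lane_emden} in $\Omega$ with $u=0$ on $\partial\Omega$, $0\le u_l\le Md^2/8$, and — letting $k\to\infty$ in $u^{(k)}_l\ge\delta\varphi$ on $B_\rho(q)$ — with $u_l\ge\delta\varphi>0$ there, hence nontrivial.

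I expect the genuinely delicate point to be this passage to the limit rather than the construction of the barriers. Thinness is used \emph{only} to produce the bounded supersolution $w$ with a uniform sup-bound, and (b) together with the lower bound $\lambda_l\ge\Lambda$ on a fixed ball is used only to make the common subsolution $\underline u$ nontrivial and localized; the difficulty is that, unlike the monotone scheme inside a fixed $\Omega_k$, the approximants $u^{(k)}$ are \emph{not} monotone in $k$, so assumption (d) does not suffice to pass $f_l(u^{(k)})\to f_l(u)$ to the limit — one truly needs the $f_l$ (and $\lambda_l$) H\"older, both to identify the limiting nonlinearity and to obtain the $C^{2,\gamma}$ compactness underlying the diagonal extraction. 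Everything else — verifying that $\overline u$, $\underline u$ are sub/supersolutions in the precise sense used in the proof of Theorem~\ref{thm:main}, and the uniform-in-$k$ elliptic estimates up to the smooth boundary $\partial\Omega$ — is routine.
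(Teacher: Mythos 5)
Your proposal is correct and follows the same overall architecture as the paper's proof: exhaust $\Omega$ by bounded subdomains, solve on each via the scheme of Theorem~\ref{thm:main}, use thinness to get an $L^\infty$ bound independent of the subdomain, and use interior/boundary elliptic estimates to pass to the limit and recover the boundary condition on $\partial\Omega$. The tactical differences are worth recording. First, you build an explicit global supersolution $Mw$ from the slab function $w=\tfrac12 t(d-t)$ and compare the iterates to it; the paper instead bounds the iterates through Theorem~\ref{prop:maximump}, whose proof uses precisely this comparison function, so the two are equivalent in substance, though your barrier formulation removes the need to take the starting subsolution small. Second, you use the first Dirichlet eigenfunction of $B_\rho(q)$ extended by zero as the subsolution, correctly noting that the normal-derivative jump contributes a negative measure to $-\Delta$ and hence preserves the weak subsolution inequality; the paper's Lemma~\ref{lemma:fundamental_lemma} uses a smooth bump function, which avoids the distributional discussion altogether but requires the computation of $\Delta\phi/\phi^s$. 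Third, and most significantly, you assert that the approximants $u^{(k)}$ are not monotone in $k$ and therefore extract a diagonal subsequence via $C^{2,\gamma}_{\mathrm{loc}}$ compactness; in fact they \emph{are} monotone: $u^{(k+1)}$ restricted to $\Omega_k$ is a supersolution there with nonnegative boundary data dominating the common starting subsolution, so by cooperativity and the maximum principle every iterate on $\Omega_k$ stays below it, whence $u^{(k)}\le u^{(k+1)}$ on $\Omega_k$. The paper exploits exactly this to get pointwise convergence of the whole sequence (no subsequence needed), and then uses the same local Schauder bootstrap you describe to upgrade the convergence and identify the limit as a classical solution. Your compactness route is still valid --- the uniform $C^{2,\gamma}$ bounds you derive do give a convergent subsequence whose limit solves \eqref{eq:gen_lane_emden}, vanishes on $\partial\Omega$, and is nontrivial because $u^{(k)}_l\ge\delta\varphi$ on $B_\rho(q)$ --- so this is a harmless misjudgment rather than a gap, but it costs you the canonicity of the limit that the monotone argument provides.
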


If the $\lambda_l$'s and the $f_l$'s are locally H\"older continuous, that is, if for some $0<\alpha<1$,
for each $l$, $\lambda_l\in C_{loc}^{\alpha}\left(\overline{\Omega}\right)$ and
$f_l\in C_{loc}^{\alpha}\left(\mathbb{R}^n\right)$
by a solution 
to (\ref{eq:gen_lane_emden}) in both cases, when $\Omega$ is 
a bounded or an unbounded domain, we mean a classical solution. When the 
$\lambda_l$'s are in $L^{\infty}\left(\Omega\right)$, in the case of $\Omega$ being a 
\emph{bounded domain}, by a solution we mean the
following:

\medskip
We say that $\left(u_0,u_1,\dots, u_n\right)\in W_0^{1,2}(\Omega)^{n+1}\cap L^{\infty}(\Omega)^{n+1}$
is a weak solution to (\ref{eq:gen_lane_emden}) if
for every $\left(\varphi_0,\varphi_1,\dots, \varphi_n\right)\in C_0^{\infty}\left(\Omega\right)^{n+1}$
it holds that
\[
\int_{\Omega}\nabla u_j \nabla \varphi_j= \int_{\Omega}\lambda_{j}\left(x\right)
f_j\left(u_0, u_1,\dots, u_n\right)\varphi_{j}\,dV, \quad j=0,1,\dots,n.
\]
The proofs of our results are based on an iteration method, and the main technical difficulty
lies in constructing a subsolution to (\ref{eq:gen_lane_emden}), that is, a function
$h$ such that 
\begin{equation*}
\left\{
    \begin{array}{l}
    -\Delta h\leq\lambda_{l}\left(x\right)f_l\left(h,h,\dots, h\right), \quad l=0,1,2,\dots, n, 
    \quad \mbox{in }\quad \Omega,\\
    h=0, \quad l=0,1,2,\dots,n, \quad \mbox{on} \quad \partial \Omega,
    \end{array}
    \right.
\end{equation*}
to start the iteration: here condition (b) is essential. We refer to the function
$h$ as a \emph{subsolution to (\ref{eq:gen_lane_emden})}. Condition (a) is used to prove 
that the iterates are bounded, and, as shown in the last section of this paper, this 
condition can
be substituted or relaxed.

\medskip
This paper is organized as follows. In Section \ref{section:maximum_p}, we prove a 
basic result,
which is a consequence of the Maximum Principle for the Laplacian, and which
is the key tool in the proof of our main results. The proof of Theorems
\ref{thm:main} and \ref{thm:main2}, assuming the existence of
a subsolution $h$, are presented in Section \ref{sect:proof_main}. In Section
\ref{sect:proof_lemma_1}, a proof of the existence of a subsolution (Lemma \ref{lemma:fundamental_lemma})
is given.
Finally, an example in the case
of a single equation where condition (a) is relaxed is presented in Section \ref{sect:last_remarks}.

\section{An important preliminary result}
\label{section:maximum_p}

The following theorem will be needed in our proof of Theorems \ref{thm:main}
and \ref{thm:main2}. It is rather well known, but we have included it for easy reference
and to give an explicit bound for the constant $c$ below.

\begin{theorem}[Extended Maximum Principle \cite{Cortissoz}]
\label{prop:maximump}
Let $\Omega$  be a smooth bounded open subset of $\mathbb{R}^n$ 
Let $h\in L^{\infty}\left(\Omega\right)$. Then the 
Dirichlet problem
\[
\left\{
\begin{array}{l}
-\Delta u = h \quad \mbox{in}\quad \Omega\\
u=0 \quad \mbox{on}\quad \partial \Omega,
\end{array}
\right.
\]
has a unique solution in $u\in W_0^{1,2}\left(\Omega\right)\cap L^{\infty}\left(\Omega\right)$.
Furthermore, we have
\[
\left\|u\right\|_{\infty}\leq c\left\|h\right\|_{\infty},
\]
where $c\leq \dfrac{d^2}{8}$, and $d$ is the slab diameter of $\Omega$.
\end{theorem}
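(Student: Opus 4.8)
The plan is to reduce the $L^\infty$ bound to two ingredients: existence/uniqueness in $W^{1,2}_0(\Omega)\cap L^\infty(\Omega)$, which is standard, and an explicit barrier argument that yields the constant $d^2/8$. Existence and uniqueness of a weak solution $u\in W^{1,2}_0(\Omega)$ follows from Lax--Milgram (the Dirichlet form is coercive on $W^{1,2}_0(\Omega)$ because $\Omega$ lies in a slab and hence has finite width, so Poincaré holds even when $\Omega$ is unbounded — though here $\Omega$ is assumed bounded). That $u\in L^\infty(\Omega)$ when $h\in L^\infty(\Omega)$ is classical elliptic regularity (De Giorgi--Stampacchia / Moser), or can be obtained directly from the barrier we construct next; either way I would just cite it. Uniqueness is immediate from coercivity.

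The heart of the matter is the bound $\|u\|_\infty\le \tfrac{d^2}{8}\|h\|_\infty$. After a rotation and translation I may assume $\Omega\subset\{x=(x_1,\dots,x_n): 0<x_n<d\}$, where $d$ is (arbitrarily close to) the slab diameter. Set $M=\|h\|_\infty$ and define the one-variable barrier
\[
w(x)=\frac{M}{2}\,x_n\,(d-x_n).
\]
Then $-\Delta w = M\ge h$ pointwise, $w\ge 0$, and on $\partial\Omega$ we have $w\ge 0 = u$. The key step is the comparison principle: since $-\Delta(w-u)=M-h\ge 0$ in $\Omega$ and $w-u\ge 0$ on $\partial\Omega$, the weak maximum principle gives $w\ge u$ in $\Omega$; applying the same argument to $w+u$ (using $-\Delta(w+u)=M+h\ge 0$) gives $w\ge -u$, hence $|u|\le w$ in $\Omega$. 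Finally $\max_{0\le x_n\le d} w = w|_{x_n=d/2} = \tfrac{M}{2}\cdot\tfrac{d}{2}\cdot\tfrac{d}{2} = \tfrac{Md^2}{8}$, which is exactly the claimed bound. Taking the infimum over admissible slab widths $d$ yields the constant $d^2/8$ with $d$ the slab diameter.

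I expect the main obstacle to be purely a matter of bookkeeping about the function class in which the comparison principle is being applied: $u$ is only a weak ($W^{1,2}_0\cap L^\infty$) solution, so "the weak maximum principle" should be invoked in the weak form — testing the equation for $w-u$ against $(w-u)^-\in W^{1,2}_0(\Omega)$ (note $w-u$ is bounded and $w\ge 0\ge$ boundary trace of $-u$, so $(w-u)^-$ has zero trace) and using $\int_\Omega \nabla(w-u)\cdot\nabla(w-u)^- = -\int_\Omega |\nabla (w-u)^-|^2$ together with $\int_\Omega (M-h)(w-u)^- \ge 0$ to conclude $(w-u)^-\equiv 0$. A minor additional point is that $w$ itself is smooth and bounded on $\Omega$ but need not vanish on $\partial\Omega$; only the inequality $w\ge 0\ge -u|_{\partial\Omega}$ on the boundary is used, so this causes no difficulty. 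No regularity of $\partial\Omega$ beyond what makes $W^{1,2}_0$ the right space is actually needed for the bound itself.
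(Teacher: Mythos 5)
Your proof is correct, and the core of it --- comparison with the one-dimensional parabolic barrier $w=\tfrac{M}{2}x_n(d-x_n)$ on a slab containing $\Omega$, whose maximum $Md^2/8$ gives the constant --- is exactly the barrier the paper uses (the paper writes it as $v=\tfrac12\bigl(\tfrac{d^2}{4}-x_n^2\bigr)\|h\|_\infty$ on the centered slab, which is the same function up to translation). Where you diverge is in how the comparison is justified for data that is merely $L^\infty$. The paper first proves the bound for \emph{smooth} $h$ by plugging the barrier into the classical maximum-principle argument of Theorem 3.7 of Gilbarg--Trudinger, and then handles general $h\in L^\infty(\Omega)$ by mollification: it takes smooth $h_n\to h$ in $L^p$ with $\|h_n\|_\infty\le\|h\|_\infty$, gets $\|u_n\|_{L^p}\le c\,\mathrm{Vol}(\Omega)^{1/p}\|h\|_\infty$, passes to the limit in $n$, and then lets $p\to\infty$. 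You instead run the comparison directly in the weak formulation, testing $-\Delta(w-u)=M-h\ge 0$ against $(w-u)^-\in W^{1,2}_0(\Omega)$; this is precisely the weak maximum principle of Theorem 8.1 in Gilbarg--Trudinger (which the paper itself invokes elsewhere), and it eliminates the approximation step entirely. Your route is shorter and avoids the slightly delicate $L^p\to L^\infty$ limiting argument; the paper's route has the mild advantage of only ever applying the maximum principle to classical solutions. Both yield the same constant, and your observation that only $w\ge 0$ on $\partial\Omega$ (not $w=0$) is needed, together with the zero trace of $(w-u)^-$, is exactly the right bookkeeping.
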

\begin{proof}
(taken from \cite{Cortissoz})
The first part of the theorem is a direct consequence of Theorem 8.3 in \cite{GilbargTrudinger}.
The second part follows from Theorem 3.7 in \cite{GilbargTrudinger}
and an approximation argument. In fact, if $h$ is smooth,
assuming that $\Omega$ is contained in the slab
\[
\Pi_d=\left\{\left(x_1, \dots, x_{n-1}, x_n\right)\in \mathbb{R}^n\,:\, -\frac{d}{2}<x_{n}
<\frac{d}{2}\right\},
\]
and taking
\[
v= \frac{1}{2}\left(\frac{d^2}{4}-x_n^2\right)\left\|h\right\|_{\infty},
\]
in the proof of Theorem 3.7 in \cite{GilbargTrudinger}, we obtain the result for
$h$ smooth. Next, if $h\in L^{\infty}\left(\Omega\right)$, take a sequence of
smooth functions $h_n$ that converge to $h$ in $L^p$, $1\leq p<\infty$, and such that
$\left\|h_n\right\|_{\infty}\leq \left\|h\right\|_{\infty}$ 
(this can be done, for instance, using Friedrichs' mollifiers).
Let $u_n$ be the solution to the Dirichlet problem with $h_n$ on the right-hand side.
By the $L^p$ theory, $u_n\rightarrow u$ in $L^p\left(\Omega\right)$.
Since the $h_n$'s and $u_n$'s are smooth, we have that
\[
\left\|u_n\right\|_{\infty}\leq c\left\|h_n\right\|_{\infty},
\]
and hence
\[
\left\|u_n\right\|_{L^p\left(\Omega\right)}\leq 
c\mbox{Vol}\left(\Omega\right)^{\frac{1}{p}}\left\|h\right\|_{\infty},
\]
from we which can conclude that
\[
\left\|u\right\|_{L^p\left(\Omega\right)}\leq 
c\mbox{Vol}\left(\Omega\right)^{\frac{1}{p}}\left\|h\right\|_{\infty}.
\]
Since $p$ is arbitrary, we obtain the result.

\end{proof}

\section{Proof of the main results}
\label{sect:proof_main}

\subsection{Proof of Theorem \ref{thm:main}} 
Our proof of Theorem \ref{thm:main} is 
based on the following iteration:
\begin{equation*}
\left\{
    \begin{array}{l}
    \Delta u_{j,k+1}=\lambda_{j}\left(x\right)f\left(u_{0,k},u_{1,k},\dots, u_{n,k}\right), \,
    \mbox{in}\,\, \Omega, \quad j=0,1,2,\dots, n,\\
    u_{j,k+1}=0\quad \mbox{on} \quad \partial \Omega, \quad j=0,1,2,\dots,n.
    \end{array}
    \right.
\end{equation*}

Our first goal is to construct a set of functions $u_{j,0}\geq 0$, $j=0,1,\dots, n$,
such that 
\[
-\Delta u_{l,0}\leq \lambda_l\left(x\right)f_l\left(u_{0,0},u_{1,0},\dots, u_{n,0}\right),
\quad u_{l,0}=0 \quad \mbox{on}\quad \partial \Omega,
\]
to start the iteration (i.e., we must construct \emph{a subsolution} to system (\ref{eq:gen_lane_emden})). 
So we have the following:
\begin{lemma}
\label{lemma:fundamental_lemma}
Let $\lambda_l \in L^{\infty}\left(\Omega\right)$ be nonnegative functions and let
    $f_l: \mathbb{R}^n\longrightarrow \mathbb{R}$ (l=0,1,\dots, n) a family of 
    functions that satisfy (b) and (c) as stated in the introduction. 
    Assume also that there is a point $q\in \Omega$
    and $\rho>0$, $l=0,1,\dots,n$, such that 
    $\lambda_l\left(x\right)\geq \Lambda>0$ for all $x\in B_{2\rho}\left(q\right)$. 
    For any $\delta>0$ there exists a function 
    $h\in C^{2}\left(\Omega\right)\cap C\left(\overline{\Omega}\right)$
    such that $0<\left\|h\right\|_{L^{\infty}\left(\Omega\right)}<\delta$,
     $h=0$ on $\partial\Omega$, and
    \[
    -\Delta h \leq \lambda_l\left(x\right)f_{l}\left(h,\dots, h\right) \quad \mbox{in}
    \quad \Omega.
    \]
\end{lemma}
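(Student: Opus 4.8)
The plan is to build the subsolution $h$ explicitly, supported essentially in the ball $B_{2\rho}(q)$, by solving an auxiliary Dirichlet problem on that ball and then extending by zero. The key point is that near the origin the lower bound (b) forces $f_l(h,\dots,h)\geq \sum_j A_{l,j}h^{\alpha_{l,j}}$, and since all the exponents $\alpha_{l,j}$ lie strictly below $1$, a small multiple of a fixed positive function will satisfy the differential inequality: scaling down $h$ decreases $-\Delta h$ linearly but decreases the right-hand side only sublinearly, so the inequality improves as the scale shrinks. This is exactly the mechanism that makes condition (b) "essential," as the introduction notes.

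\medskip
Concretely, first I would fix the ball $B=B_{2\rho}(q)\subset\Omega$ and let $\varphi$ be the first Dirichlet eigenfunction of $-\Delta$ on $B$, normalized so that $\varphi>0$ in $B$ and $\max_B\varphi=1$, with eigenvalue $\mu_1=\mu_1(B)>0$; thus $-\Delta\varphi=\mu_1\varphi$ in $B$. (Alternatively one can take the torsion function, solution of $-\Delta\psi=1$, $\psi|_{\partial B}=0$; either works.) Set $h_t=t\varphi$ on $B$ and $h_t\equiv 0$ on $\Omega\setminus B$. Then $h_t\in C^2(B)$, is continuous on $\overline\Omega$, vanishes on $\partial\Omega$, and on $B$ we have $-\Delta h_t=t\mu_1\varphi$. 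Pick a fixed index $j(l)$ for each $l$ with $A_{l,j(l)}>0$ (condition (b) guarantees one exists) and write $\alpha_l=\alpha_{l,j(l)}$, $a_l=A_{l,j(l)}>0$. On $B$, provided $t<\epsilon_0$ so that $h_t<\epsilon_0$ everywhere, condition (b) gives
\[
\lambda_l(x)f_l(h_t,\dots,h_t)\ \geq\ \Lambda\,a_l\,h_t^{\alpha_l}\ =\ \Lambda\,a_l\,t^{\alpha_l}\varphi^{\alpha_l}\ \geq\ \Lambda\,a_l\,t^{\alpha_l}\varphi,
\]
where the last step uses $0\le\varphi\le 1$ and $\alpha_l<1$. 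So it suffices to choose $t$ with $t\mu_1\le \Lambda a_l t^{\alpha_l}$ for every $l=0,\dots,n$, i.e. $t^{1-\alpha_l}\le \Lambda a_l/\mu_1$; since there are finitely many $l$ and each exponent $1-\alpha_l$ is positive, this holds for all sufficiently small $t>0$. Shrinking $t$ further if necessary so that $t<\min(\epsilon_0,\delta)$, we get $0<\|h_t\|_\infty=t<\delta$ and $-\Delta h_t\le \lambda_l f_l(h_t,\dots,h_t)$ on $B$; off $\overline B$ both sides are (essentially) zero — more precisely $-\Delta h_t=0$ there while the right-hand side is $\geq 0$ by cooperativity applied at $z=0$ together with (b), or simply because $f_l\ge 0$ near the origin — and this is the subsolution claimed.

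\medskip
\textbf{The one subtlety} to handle carefully is the matching across $\partial B$: the zero-extension $h_t$ is only Lipschitz, not $C^2$, across $\partial B$, yet the lemma asks for $h\in C^2(\Omega)\cap C(\overline\Omega)$. The standard fix is to interpret the differential inequality in the weak (distributional) sense, where the jump in the normal derivative of $-\Delta\varphi$ at $\partial B$ is nonnegative (the inward-pointing kink of a subsolution), so $h_t$ is a genuine weak subsolution on all of $\Omega$; since the iteration in Section~\ref{sect:proof_main} only needs a subsolution in this sense, that is enough. If one genuinely wants a $C^2$ subsolution, replace the eigenfunction by a smooth bump: take $h_t=t\eta$ where $\eta\in C_c^\infty(B)$, $0\le\eta\le1$, $\eta\equiv1$ on $B_\rho(q)$; then $-\Delta\eta$ is a fixed bounded function, say $|\Delta\eta|\le M$, and on the region where $\eta<1$ we may still have $-\Delta h_t$ positive, but there $h_t$ could be as small as we like — so the sublinear bound $\Lambda a_l t^{\alpha_l}\eta^{\alpha_l}$ need not dominate $tM$ pointwise. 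This is the real obstacle, and it is resolved by choosing $\eta$ to be the solution of an obstacle-type or by working instead with $h_t=t\,v$ where $v$ solves $-\Delta v=1$ on $B$, $v|_{\partial B}=0$ (the torsion function, which is $C^\infty$ in $B$ and $C^{1,1}$ up to the boundary, $0\le v\le \|v\|_\infty$): then $-\Delta h_t=t$ on $B$ and we need $t\le \Lambda a_l (t\,v(x))^{\alpha_l}$ for all $x\in B$, which fails where $v(x)$ is small near $\partial B$. Hence the genuinely robust choice is the eigenfunction together with the weak formulation, as above; I would present that, remarking that interior elliptic regularity then upgrades the final solution (not the subsolution) to $C^2$ in the locally Hölder case. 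In short: the construction is $h=t\varphi_1$ extended by zero, the inequality reduces to finitely many conditions of the form $t^{1-\alpha_l}\lesssim \Lambda A_{l,j(l)}/\mu_1(B_{2\rho}(q))$, all satisfied for $t$ small, and the only care needed is the functional-analytic sense in which the inequality holds across $\partial B_{2\rho}(q)$.
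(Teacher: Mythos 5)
Your mechanism is the right one and matches the paper's in spirit: take a fixed positive profile supported in the ball where $\lambda_l\geq\Lambda$, scale it by a small parameter, and let sublinearity ($\alpha_{l,j}<1$) close the inequality, since the left side scales like $t$ and the right side like $t^{\alpha_l}$. Your reduction to finitely many conditions $t^{1-\alpha_l}\leq \Lambda a_l/\mu_1$ is correct, as is the observation that the zero-extension of $t\varphi_1$ is a weak subsolution (the singular part of $-\Delta$ on $\partial B$ is a nonpositive measure, so the inequality survives distributionally), and that would indeed suffice to start the iteration of Section \ref{sect:proof_main}.

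However, there is a gap relative to the lemma as stated, which demands $h\in C^{2}\left(\Omega\right)\cap C\left(\overline{\Omega}\right)$ with the inequality holding pointwise in $\Omega$: your $h_t$ is only Lipschitz across $\partial B$, and you explicitly assert that the obstacle cannot be overcome with a smooth compactly supported profile. That assertion is what fails. The paper takes
\[
\phi(x)=\exp\left(-\frac{1}{R^{2}-|x|^{2}}\right)\ \text{ for }|x|<R,\qquad \phi(x)=0\ \text{ for }|x|\geq R,
\]
which is globally $C^{\infty}$, and observes that $-\Delta\phi=-\phi\cdot Q(x)$ where $Q$ is bounded by a polynomial in $1/\left(R^{2}-|x|^{2}\right)$; hence $-\Delta\phi/\phi^{s}=-\phi^{1-s}Q$ is bounded, because $\phi^{1-s}$ decays faster than any such polynomial grows as $|x|\to R$. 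This yields $-\Delta\phi\leq C\phi^{s}$ on all of $\Omega$ with a genuinely smooth $\phi$, and then the same scaling argument you use ($h=\eta\phi$ with $C\eta^{1-s}\leq\Lambda A$ and $s\geq\max\alpha_{l,j}$, using $0\leq h\leq 1$ to pass from $h^{s}$ to $h^{\alpha_{l,j}}$) finishes the proof. Your analysis of why a generic cutoff $\eta$ or the torsion function fails near the support boundary is accurate; the missing idea is that the exponential bump's super-polynomial vanishing is precisely calibrated so that $\phi^{s}$ still dominates $-\Delta\phi$ there. With that substitution your argument becomes a complete proof of the lemma as stated.
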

The proof of this lemma is rather long and technical and we shall give it later. 
Once we have Lemma \ref{lemma:fundamental_lemma}, the proof of Theorem \ref{thm:main}
can proceed as follows.

\medskip
We then set $u_{j,0}=h$ for all $j=0,1,2,\dots,n$, and given $u_{j,k}$, $j=0,1,\dots,n$,
define the $u_{j,k+1}$ as the solution to the Dirichlet problem
\begin{equation}
\label{eq:rec_gen_lane_emden}
\left\{
    \begin{array}{l}
    -\Delta u_{j,k+1}=\lambda_{j}\left(x\right)f_j\left(u_{0,k},u_{1,k},\dots, u_{n,k}\right), \quad j=0,1,2,\dots, n,\\
    u_{j,k+1}=0\quad \mbox{on} \quad \partial \Omega, \quad j=0,1,2,\dots,n.
    \end{array}
    \right.
\end{equation}
Using Theorem \ref{prop:maximump}, we know that each of these equations has a solution
in $W^{1,2}_0\left(\Omega\right)^{n+1}\cap L^{\infty}\left(\Omega\right)^{n+1}$.

\subsubsection{}
Now we show that the sequence $\left(u_{0,k},u_{1,k},\dots,u_{n, k}\right)$ converges towards a 
solution of (\ref{eq:gen_lane_emden}). We proceed as follows. First we show that
$u_{j,k}\leq u_{j,k+1}$, and then that, for each $j$, the sequence 
$\left\{u_{j,k}\right\}_{k=0,1,2\dots}$ is bounded in $L^{\infty}\left(\Omega\right)$-norm.

\medskip
The first claim follows from induction and the Classical Maximum Principle. Notice that by construction
\begin{eqnarray*}
-\Delta \left(u_{l,1}-u_{l,0}\right)&\geq &\lambda_{l}\left(x\right)f_l\left(u_{0,0},\dots,u_{n,0}\right)-
\lambda_{l}\left(x\right)f_l\left(h,\dots,h\right)\\
&=&\lambda_{l}\left(x\right)f_l\left(h,\dots,h\right)-\lambda_{l}\left(x\right)f_l\left(h,\dots,h\right)=0,
\end{eqnarray*}
and $u_{j,0}-u_{j,1}=0$ at $\partial \Omega$, and thus we must have by the Maximum Principle that $u_{j,1}\geq u_{j,0}$
in $\Omega$. Before going any further, we must
say a word about our use of the Maximum Principle: the inequalities involving the Laplacian
hold in the weak sense, and thus the deduced inequalities for the $u_{j,k}$'s hold almost everywhere
(see, for instance, Theorem 8.1 in \cite{GilbargTrudinger}).

\medskip
Once we have $u_{j,k}\geq u_{j,k-1}$, $j=0,1,\dots,n$, then we have
\[
-\Delta \left(u_{l,k+1}-u_{l,k}\right)\geq 
\lambda_{l}f_l\left(u_{0,k},\dots,u_{n,k}\right)
-\lambda_{l}f_l\left(u_{0, k-1},\dots,u_{n, k-1}\right)\geq 0
\]
and at the boundary $u_{j,k}-u_{j,k+1}=0$, and hence $u_{j,k+1}\geq u_{j,k}$ by the Maximum Principle. The induction
is complete, and the first part of the proof is complete as well.

\medskip
Next, we show that the iterates remain uniformly bounded. By Theorem \ref{prop:maximump},
\begin{eqnarray*}
\left\|u_{l,k+1}\right\|_{\infty}&\leq& c\Lambda \left\|f\left(u_{0,k},\dots,u_{n,k}\right)\right\|_{\infty} \\
&\leq & c\Lambda\sum_{j}B_{l,j} \left\|u_{j}\right\|_{\infty}^{p_{l,j}}+c\Lambda A.
\end{eqnarray*}
Therefore, for convenient constants $B'_{l,i}$ we have
\begin{eqnarray*}
1+\sum_l \left\|u_{l,k+1}\right\|_{\infty}&\leq&
2c\Lambda \sum_{l.i} B_{l,i}'\left(1+\sum_j \left\|u_{j,k}\right\|_{\infty}\right)^p\\
&\leq &
2\left(n+1\right)c\Lambda B\left(1+\sum_j \left\|u_{j,k}\right\|_{\infty}\right)^p,
\end{eqnarray*}
where $B=\max_{i,l}B_{i,l}'$, and $p=\max_{l,j}p_{l,j}$. Since $0<p<1$, if we pick
$h$ so that $\left\|h\right\|_{\infty}$ is small enough, this shows that the iterates $u_{j,k}$ are bounded
in $L^{\infty}\left(\Omega\right)$-norm.

\medskip
To finish our proof in the case of a bounded domain, we shall need the following lemma,
whose statement and proof (with the appropriate modifications),
and which we include here for the convenience of the reader, are taken from \cite{Cortissoz}. 
\begin{lemma}
\label{lemma:convergence}
Let $\{\left(u_{0,k},\dots,u_{n,k}\right)\}_{k}$ be a sequence in 
$L^{\infty}\left(\Omega\right)^{n+1}\cap W_0^{1,2}\left(\Omega\right)^{n+1}$, $\Omega$ a bounded domain, produced
from the iteration procedure described above, and such that
$u_{j,k} \uparrow u_j$ a.e.
Then $\left(u_0,u_1,\dots,u_n\right)$ is a weak solution to the BVP (\ref{eq:gen_lane_emden}). 
\end{lemma}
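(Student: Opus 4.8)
The plan is to pass to the limit in the weak formulation of the iteration scheme~(\ref{eq:rec_gen_lane_emden}). Since each $u_{j,k+1}$ solves
\[
\int_{\Omega}\nabla u_{j,k+1}\nabla\varphi_j
=\int_{\Omega}\lambda_j(x)f_j\left(u_{0,k},\dots,u_{n,k}\right)\varphi_j\,dV
\]
for every $\varphi_j\in C_0^{\infty}(\Omega)$, it suffices to show that the left-hand side converges to $\int_{\Omega}\nabla u_j\nabla\varphi_j$ and the right-hand side to $\int_{\Omega}\lambda_j(x)f_j(u_0,\dots,u_n)\varphi_j\,dV$. I would first establish that $(u_{j,k})_k$ is bounded in $W_0^{1,2}(\Omega)$ uniformly in $k$: testing the equation for $u_{j,k+1}$ against $u_{j,k+1}$ itself gives $\|\nabla u_{j,k+1}\|_{L^2}^2=\int_{\Omega}\lambda_j f_j(u_{0,k},\dots,u_{n,k})u_{j,k+1}$, and using the already-established $L^{\infty}$ bound on the iterates together with growth condition~(a) and $\lambda_j\in L^{\infty}(\Omega)$ (recall $\Omega$ is bounded, so it has finite volume), the right-hand side is bounded by a constant independent of $k$. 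Hence a subsequence of $(u_{j,k})_k$ converges weakly in $W_0^{1,2}(\Omega)$; since $u_{j,k}\uparrow u_j$ a.e.\ and the sequence is $L^{\infty}$-bounded, the weak limit must be $u_j$, and in fact the whole sequence converges weakly because the monotone a.e.\ limit pins down the limit uniquely. Weak $W_0^{1,2}$ convergence immediately handles the left-hand side for each fixed $\varphi_j$.

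For the right-hand side, the key point is the convergence $f_j(u_{0,k},\dots,u_{n,k})\to f_j(u_0,\dots,u_n)$. This is precisely what continuity condition~(d) provides: the sequences $u_{i,k}(x)$ are nondecreasing in $k$ and converge pointwise a.e.\ to $u_i(x)$, so for a.e.\ $x$ we have $f_j(u_{0,k}(x),\dots,u_{n,k}(x))\to f_j(u_0(x),\dots,u_n(x))$. Moreover, by the $L^{\infty}$ bound on all the iterates and growth condition~(a), the functions $f_j(u_{0,k},\dots,u_{n,k})$ are dominated by a fixed $L^{\infty}$ (hence, on the bounded domain $\Omega$, $L^1$) function independent of $k$; since $\varphi_j$ and $\lambda_j$ are bounded with $\varphi_j$ compactly supported, the integrands $\lambda_j f_j(u_{0,k},\dots,u_{n,k})\varphi_j$ are dominated uniformly, and the Dominated Convergence Theorem yields
\[
\int_{\Omega}\lambda_j(x)f_j\left(u_{0,k},\dots,u_{n,k}\right)\varphi_j\,dV
\longrightarrow
\int_{\Omega}\lambda_j(x)f_j\left(u_0,\dots,u_n\right)\varphi_j\,dV.
\]
Passing to the limit $k\to\infty$ in the weak formulation of~(\ref{eq:rec_gen_lane_emden}) then shows that $(u_0,\dots,u_n)$ satisfies the weak formulation of~(\ref{eq:gen_lane_emden}). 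Finally, $u_j\in W_0^{1,2}(\Omega)\cap L^{\infty}(\Omega)$: the $L^{\infty}$ membership is the established uniform bound, and the $W_0^{1,2}$ membership follows because $u_j$ is the weak-$W_0^{1,2}$ limit of $u_{j,k}$ and $W_0^{1,2}(\Omega)$ is weakly closed.

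I expect the main obstacle to be the rigorous identification of the weak limit and the justification that one may work with the full sequence rather than a subsequence. This is not deep, but it requires care: one extracts a weakly convergent subsequence in $W_0^{1,2}$, notes that weak $W^{1,2}$ convergence implies (after possibly a further subsequence, via Rellich--Kondrachov) strong $L^2$ and a.e.\ convergence, matches this with the monotone pointwise limit $u_j$ to conclude the subsequential weak limit is $u_j$, and then invokes the standard fact that if every weakly convergent subsequence of a bounded sequence has the same limit, the whole sequence converges weakly to it. A secondary technical point worth a remark is that condition~(d) is stated for real numbers with $z_{i,k}\uparrow z_i$; one applies it at a.e.\ fixed $x\in\Omega$ using that $u_{i,k}(x)\uparrow u_i(x)$, which is exactly the monotonicity established in the first part of the proof of Theorem~\ref{thm:main}. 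Everything else---the energy estimate, the domination, the Dominated Convergence argument---is routine given the tools already assembled.
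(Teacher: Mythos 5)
Your proposal is correct, and it reaches the conclusion by a somewhat different analytic route than the paper for the key step of handling the gradient term. The paper proves that $(u_{j,k})_k$ is a \emph{Cauchy sequence in} $W_0^{1,2}(\Omega)$: it tests the difference of two iterates of the equation against $u_{j,m}-u_{j,l}$, obtains $\int_\Omega|\nabla(u_{j,m}-u_{j,l})|^2\,dV\leq 2E\Lambda\int_\Omega|u_{j,m}-u_{j,l}|\,dV$, and then controls the right-hand side via Egorov's theorem (uniform Cauchyness off a set of small measure, plus the uniform $L^\infty$ bound on the exceptional set). This yields \emph{strong} $W_0^{1,2}$ convergence to $u_j$. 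You instead derive a uniform $W_0^{1,2}$ energy bound by testing each equation against its own solution, extract a weakly convergent subsequence, identify the weak limit with $u_j$ through the a.e.\ monotone convergence, and upgrade to convergence of the full sequence by the uniqueness-of-subsequential-limits argument; weak convergence then suffices to pass to the limit in $\int_\Omega\nabla u_{j,k}\nabla\varphi_j$. Both arguments are sound. Yours is more economical (no Egorov, and in fact you do not even need Rellich--Kondrachov to identify the weak limit, since $u_{j,k}\to u_j$ already in $L^2$ by dominated convergence from the $L^\infty$ bound and the a.e.\ convergence); the paper's argument buys the stronger conclusion that the iterates converge in the $W_0^{1,2}$ norm, which is more information than the lemma's statement requires. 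The treatment of the nonlinear term is identical in substance: you correctly and explicitly invoke the monotone-continuity hypothesis (d) pointwise a.e.\ together with the uniform bound from (a) to apply dominated convergence, which is exactly what the paper does implicitly when it writes ``since $u_{j,m-1}\uparrow u_j$.''
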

\begin{proof}
Assume that for all $j, k$, $\left|u_{j,k}\right|\leq D$.
Then, for any $m, l$ we have, 
$$\left|f\left(u_{0,k},\dots, u_{n,k}\right)\right|\leq E.$$ Also, from the system, multiplying both sides by $u_{j,m}-u_{j,l}$ and integrating by parts yields
\begin{eqnarray*}
\int_{\Omega}\left|\nabla\left(u_{j,m}-u_{j,l}\right)\right|^2\, dV
&=&\int_{\Omega} \lambda_{j} \left[f_j\left(u_{0,m-1},\dots, u_{n,m-1}\right)\right.\\
&&\left.-f_j\left(u_{0,l-1},\dots, u_{n,l-1}\right)\right]
\left(u_{j,m}-u_{j,l}\right)\,dV\\
&\leq& 2E\Lambda\int_{\Omega} \left|u_{j,m}-u_{j,l}\right|
\, dV,
\end{eqnarray*}
where $\Lambda=\max_{j}\left\|\lambda_{j}\right\|_{\infty}$.
As $u_{j,k}\rightarrow u_j$ a.e., by Egorov's theorem, given $\epsilon >0$, there is 
$\Omega'\subset \Omega$, such that 
\[
V\left(\Omega\setminus \Omega'\right)\leq \epsilon/\left(2E\Lambda+
2DV\left(\Omega\right)\right),
\]
where $V\left(\Omega\right)$ is the volume of $\Omega$, and 
such $\{u_{j,k}\}_k$ is uniformly Cauchy in $\Omega'$. Therefore, if $m,l$ are such that 
$$\left|u_{j,m}-u_{j,l}\right|\leq \epsilon / \left(2E\Lambda\right)$$
in $\Omega'$, then we have that
\[
\int_{\Omega}\left|\nabla\left(u_{j,m}-u_{j,l}\right)\right|^2\, dV
\leq 2\epsilon, 
\]
and thus the sequence is Cauchy in $W^{1,2}_0\left(\Omega\right)$, and in consequence it converges,
in $W^{1,2}_0\left(\Omega\right)$, say towards $u_j$. 
Let $\left(\varphi_0,\dots,\varphi_n\right)\in C_0^{\infty}\left(\Omega\right)^{n+1}$. Then, we have that
\[
\int_{\Omega}\nabla u_{j,m} \nabla \varphi_j\,dV=
\int_{\Omega}\lambda_{j}f\left(u_{0,m-1},\dots, u_{n,m-1}\right)\varphi_j\,dV,
\]
and hence, by the dominated convergence theorem, we
can take the limit $m\rightarrow\infty$ on both sides, and since
$u_{j,m-1}\uparrow u_j$, this shows that
$u_j$, $j=0,\dots,n$, is a weak solution to the BVP (\ref{eq:gen_lane_emden}).
\end{proof}

From the previous lemma it holds that the statement of Theorem \ref{thm:main} holds for bounded domains.
Also, and this is quite important, \emph{we can bound the solutions to (\ref{eq:gen_lane_emden}) in terms of an 
absolute constant that depends only on the assumptions on the $f_l$'s, on the maximum 
norm of the $\lambda_l$'s, and on the slab diameter of
the domain}.

\subsection{Regularity}
Next we prove that if $\lambda$ and $f$ are H\"older continuous, then the solution constructed above 
is classical:
this will be important to prove Theorem \ref{thm:main2}.
Since the $u_j$'s obtained are bounded,
each $f_l\left(u_1,\dots,u_n\right)\in L^{\infty}\left(\Omega\right)$. Hence,
by Theorem 8.33 in \cite{GilbargTrudinger}, any weak solution to 
\[
\Delta w = \lambda_l\left(x\right)f_l\left(u_1,\dots,u_n\right)
\]
is $C^1$. Thus, the functions $u_j$ are $C^1$, and hence 
each of the functions $f_l\left(u_1,\dots,u_n\right)$, 
as functions of $x\in \Omega$ are H\"older continuous, and again, by elliptic regularity,
this implies that the $u_j$'s are $C^2$.

\subsection{Proof of Theorem \ref{thm:main2}} 
In the case of a thin unbounded domain $\Omega$, we proceed as follows. 
Let $\left\{\Omega_k\right\}_{k=1,2,\dots}$
be a family of open subsets of $\Omega$ such that 
\begin{itemize}
\item each $\Omega_j$ is a bounded subset of $\Omega$, 

\item $\Omega_{k}\subset \Omega_{k+1}$, 

\item $\partial \Omega \cap \partial \Omega_j$ is an open subset of $\partial\Omega$, 

\item 
$\partial \Omega_k \cap \partial \Omega\subset\partial \Omega_{k+1} \cap \partial \Omega$,

\item 
$\bigcup \left(\partial \Omega_k \cap \partial \Omega\right)=\partial \Omega$
and $\bigcup_k \Omega_k=\Omega$.
\end{itemize}
A family such as the one described above can be 
constructed by intersecting $\Omega$ with large cubes and by ``smoothing the corners".
To get a solution to (\ref{eq:gen_lane_emden}) in $\Omega$,
solve the system (\ref{eq:gen_lane_emden}) in each of the $\Omega_k$'s defined above
(that is, apply Theorem \ref{thm:main}), and call 
the respective solutions $\left(u_{0,k},\dots,u_{n,k}\right)$. Notice that, being the slab diameter of
all the sets in the exhaustion being bounded above by the slab diameter of $\Omega$, there is
a $D>0$ such that 
\[
\left\|u_{j,k}\right\|_{L^{\infty}\left(\Omega_k\right)}\leq D.
\]
By the assumption on the regularity of the functions $\lambda_l$ and $f_l$, each 
of the solutions $u_{j,k}$ are $C^{2,\alpha}\left(\Omega_k\right)$ for certain $0<\alpha<1$.
Also, the Maximum Principle in this case implies that $u_{j,k}\leq u_{j,k+1}$ in the
intersection of their domains (that is $\Omega_k$). From these two facts, using the monotonicity property 
of the $f_l$'s (assumption (c)), the nonnegativity of the $\lambda_l$'s, and the H\"older continuity of the $f_l$'s,
it is not difficult to show that 
$\left(u_{0,k},\dots,u_{n,k}\right)$ converges pointwise to a vector-valued function
$\left(u_{0},\dots,u_{n}\right)$. 

\medskip
Next we show that $\left(u_{0},\dots,u_{n}\right)$ is a solution to (\ref{eq:gen_lane_emden}) in the domain $\Omega$.
By the local version of 
estimate (8.87) in Theorem 8.33 in \cite{GilbargTrudinger} 
(see Section 6.1 in \cite{GilbargTrudinger}), since $\left\|u_{j,k}\right\|_{L^{\infty}\left(\Omega_k\right)}\leq D$,
for $D$ independent of $j$ and $k$,
then for a fixed $k$, given a subdomain $L_k\subset \Omega_k$ with compact closure, $\left|u_{j,m}\right|_{C^{1,\alpha}\left(L_k\right)}\leq D'_k$, for $m\geq k$ with $D'_k$ independent of $m$ and from this follows that each component of
$\left(u_{0},\dots,u_{n}\right)$ belongs to $C_{loc}^{1,\beta}\left(\Omega_k\right)$. Hence, since the $f_l$'s
are locally H\"older continuous, by 
the Schauder estimates, the $u_{j,k}$'s belong to $C^{2,\beta}_{loc}\left(\Omega_k\right)$
and thus the limit functions
(the $u_{j}$'s) belong to $C_{loc}^{2,\frac{\beta}{2}}\left(\Omega\right)$ and in $\Omega$ they satisfy
\[
-\Delta u_l = \lambda_l\left(x\right)f_l\left(u_0,u_1,\dots,u_n\right) \quad \mbox{in} \quad \Omega,
\quad
l=0,1,\dots,n.
\]
Next we show that $\left(u_0,u_1,\dots,u_n\right)$ satisfies the boundary conditions. Let $p\in\partial\Omega$
and $\epsilon>0$
and take a ball of radius $r=\frac{\epsilon}{2D}>0$ centered at $p$ and consider the domain $L=\Omega\cap B_r\left(p\right)$. Then for some $K_0$,
$L\subset \Omega_k$ and $\partial L\cap \partial \Omega\subset \partial \Omega_k \cap \partial \Omega$
if $k\ge K_0$. Then, the previous considerations show that 
$\left|u_{j,k}\right|_{C^{1,\alpha}\left(\overline{L}\right)}\leq D$ for all $k\geq K_0$ (see Theorem 8.33 in 
\cite{GilbargTrudinger}). Thus, we can estimate por $p'\in L$
\begin{eqnarray*}
    \left|u_{j}\left(p'\right)\right|&=& \left|u_{j}\left(p'\right)-u_{j,k}\left(p\right)\right|\\
    &\leq& \left|u_{j}\left(p'\right)-u_{j,k}\left(p'\right)\right|+
    \left|u_{j,k}\left(p'\right)-u_{j,k}\left(p\right)\right|\\
    &\leq& \left|u_{j}\left(p'\right)-u_{j,k}\left(p'\right)\right|+Dr\\
    &\leq& \left|u_{j}\left(p'\right)-u_{j,k}\left(p'\right)\right|+\frac{\epsilon}{2}
\end{eqnarray*}
Thus, if we choose $k\geq K_0$ so that $\left|u_{j}\left(p'\right)-u_{j,k}\left(p'\right)\right|<\frac{\epsilon}{2}$,
then 
\[
 \left|u_{j}\left(p'\right)\right|<\epsilon.
\]
Since $p'\in L$ is arbitrary, we have shown that $u_{j}\left(x\right)\rightarrow 0$ as $x\rightarrow p$, which
is what we wanted to prove. This finishes the proof of Theorem \ref{thm:main2}
in the case of $\Omega$ being an unbounded thin domain.

\section{Proof of Lemma \ref{lemma:fundamental_lemma}}
\label{sect:proof_lemma_1}
Assume $0\in \Omega$.
Consider the standard bump function
Without loss of generality assume that $0\in \Omega$. 
Let $B_R\left(0\right)$ be the ball of radius $R>0$ centered at 0.
Choose
$R>0$ so that 
$\overline{B_R\left(0\right)}\subset \Omega$, all the $\lambda_l\left(x\right)\geq \Lambda>0$ in
$\overline{B_R\left(0\right)}$ (so we assume that $q=0$ and $\rho=R/2$), and
choose
$s \in (0,1)$ such that
\[
0<\alpha_{l,j}\leq s
\]
for all $l,j$.

Let $\phi\in C^\infty_c(\mathbb{R}^n)$ be defined by
\[
\phi(x) =
\begin{cases}
\exp\left( -\dfrac{1}{R^2 - |x|^2} \right), & \text{if } |x| < R, \\
0, & \text{if } |x| \geq R.
\end{cases}
\]
We compute the Laplacian of $\phi$:
\[
\Delta \phi(x) = \phi(x) \cdot
\left[ \left( \frac{2\left|x\right|}{(R^2 - \left|x\right|^2)^2} \right)^2 + \frac{2(R^2 + 3\left|x\right|^2)}{(R^2 - \left|x\right|^2)^3} + \frac{n-1}{\left|x\right|} \cdot \frac{2\left|x\right|}{(R^2 - \left|x\right|^2)^2} \right].
\]
Therefore
\[
\frac{-\Delta \phi(x)}{\phi(x)^s} = -\phi(x)^{1-s} \cdot \left[ \left( \frac{2\left|x\right|}{(R^2 - \left|x\right|^2)^2} \right)^2 + \frac{2(R^2 + 3\left|x\right|^2)}{(R^2 - \left|x\right|^2)^3} + \frac{n-1}{\left|x\right|} \cdot \frac{2\left|x\right|}{(R^2 - \left|x\right|^2)^2} \right].
\]
Since the expression between square brackets can be bounded above by a polynomial in
$\dfrac{1}{R^2-\left|x\right|^2}$,
$\phi$ vanishes rapidly (more so than any polynomial
in $R^2-\left|x\right|^2$) near the boundary and is smooth inside $B_R$, then
\[
-\Delta \phi\leq C \phi^s \quad \text{in } \Omega,
\]
for some constant $C > 0$ depending on $R$, $n$, and $s$.

\medskip
Let $0<\eta<1$ be such that $\eta\phi<\min\left\{\delta,1\right\}$ and $C\eta^{1-s}<\Lambda A$, where
\[
A =\min\left\{A_{l,j}:\, A_{l,j}\neq 0\right\}.
\]
Then we can estimate
\[
-\Delta (\eta\phi)\leq C\eta \phi^{s}
=C\eta^{1-s}\left(\eta \phi\right)^{s}
\leq \Lambda A\left(\eta \phi\right)^{s}.
\]
We claim that $h=\eta \phi$ is the subsolution we seek.
Indeed,
\begin{eqnarray*}
    -\Delta h &\leq& \Lambda A h^s \\
    &\leq& \Lambda\sum_{j=1}^n A_{l,j} h^s \quad\mbox{(by the definition of}\,\, A) \\
    &\leq & \Lambda \sum_{j=1}^n A_{l,j} h^{\alpha_{l,j}} \quad \text{(since}\,\, 0\leq h\leq 1)\\
    &\leq& \lambda_l\left(x\right)f_l\left(h,\dots,h\right),
\end{eqnarray*}
this last inequality justified by the hypothesis on $h$.

\section{Final Remarks}
\label{sect:last_remarks}

Condition (a) in the Introduction was used to prove that if we start
the iteration with $h$ small enough, the iterates are bounded
in the $L^{\infty}\left(\Omega\right)$-norm. However, condition (a) as it is, can be substituted 
by a different condition.

\medskip
Indeed, consider the Dirichlet problem,
\begin{equation}
\label{eq:Dirichlet_2}
-\Delta u = \lambda f\left(u\right) \quad \mbox{in}\quad \Omega,\quad
u=0 \quad \mbox{on} \quad \partial \Omega,
\end{equation}
with $\Omega$ a smooth
bounded domain, and $f\in L^{\infty}\left(\Omega\right)$.

\medskip
Assume that $f$ satisfies conditions (b)-(c) and the condition
\begin{itemize}
    \item[(a')] Assume that the function $\dfrac{d^2}{8}\lambda f\left(x\right)$ ($d$ is the
    slab diameter of $\Omega$) has a positive
    fixed point.
\end{itemize}
Then the Dirichlet problem (\ref{eq:Dirichlet_2}) has a positive solution. The proof goes as follows.
Let $x_0>0$ be a positive fixed point of $\dfrac{d^2}{8}\lambda f\left(x\right)$.
By (b), there is a function $h$ such that
\[
-\Delta h \leq \lambda f\left(h\right) \quad \mbox{in}\quad \Omega,\quad
h=0 \quad \mbox{on} \quad \partial \Omega.
\]
and $\left\|h\right\|_{\infty}<x_0$. Set $u_0=h$ and let $u_{n+1}$ be the 
solution to
\[
-\Delta u_{n+1}=\lambda f\left(u_n\right)
 \quad \mbox{in}\quad \Omega, \quad
u_{n+1}=0 \quad \mbox{on} \quad \partial \Omega.
\]
Then, by induction, it can be shown (as in Lemma 4 in \cite{Cortissoz2}) that
\[
\left\|u_{n}\right\|_{\infty}\leq x_0,
\]
and hence, arguing as before, the sequence $\left\{u_n\right\}_{n=0,1,2,\dots}$
converges towards a weak solution to (\ref{eq:Dirichlet_2}). If $f$ is locally H\"older,
this solution is a classical solution. Also, if $\Omega$ is a thin unbounded open set
($f$ locally H\"older),
just as before, the existence of a classical non-negative non-trivial solution can be shown. 

\medskip
A concrete example where the discussion above applies is given by the family of functions 
$f\left(x\right)=x^se^{x^m}$,
with $0<s<1$ and $m>0$. If $\lambda>0$ is small enough, then 
$\lambda x^s e^{x^m}$ has at least one positive fixed point. Finally, notice that
for fixed $\lambda$, if the domain is thin enough, then (a') holds, and thus
our arguments also apply: solvabilty is thus also related to the size of the domain.








\end{document}